\crefname{thm}{theorem}{theorems}
\Crefname{thm}{Theorem}{Theorems}
\newcommand{\Addresses}{{
		\bigskip
		\footnotesize
		
		\textsc{School of Mathematical Sciences, Tel Aviv University, Ramat Aviv, Tel Aviv 6997801, Israel}\par\nopagebreak
		\textit{E-mail address:} \href{mailto:zahihaza@tauex.tau.ac.il}{\color{black}{\textbf{zahihaza@tauex.tau.ac.il}}}

}}
\def\mynameref#1#2{%
	\begingroup
	\edef\@mytxt{#2}%
	\edef\@mytst{\expandafter\@thirdoffive\@mytxt}%
	\ifx\@mytst\empty\else
	\space(\nameref{#1})\fi
	\endgroup
}
\newcommand{\ocal}{\mathcal{O}}
\newcommand{\pcal}{\mathcal{P}}
\newcommand{\pomega}{\varpi}
\newcommand{\bbc}{\mathbb{C}}
\newcommand{\bbr}{\mathbb{R}}
\newcommand{\bbz}{\mathbb{Z}}
\theoremstyle{plain}
\newtheorem{Theorem}{Theorem}
\newtheorem{seclem}{Lemma}%[section]
\newtheorem{intseclem}{Lemma}
\newtheorem{cor}{Corollary}
\newtheorem{prop}{Proposition}
\newtheorem{intTheorem}{Theorem}
\theoremstyle{definition}
\theoremstyle{remark}
\title{A Note on the Asymptotic Expansion of Matrix Coefficients over $p$-adic Fields}
\author{Zahi Hazan}
\date{}
\begin{document}
\maketitle

\abstract{
In this note, presented as a ``community service", followed by the PhD research of the author, we draw the relation between Casselman's theorem \cite{casselman1993introduction} regarding the asymptotic behavior of matrix coefficients of reductive algebraic groups over $p$-adic fields and its expression as a finite sum of finite functions. In addition, we write the expansion explicitly for general linear groups.
}

\section{Introduction}
Let $k$ be a non-Archimedean locally compact field with $\ocal$, its ring of integers, and $\pcal$, the maximal ideal in $\ocal$. Denote a uniformizer of $\pcal$ by $\pomega$, and the cardinality of the residue field by $q$. Let $G$ be the group of $k$-rational points of a $k$-split reductive algebraic group. Let $(\pi,V)$ be a (complex) smooth, admissible, irreducible representation of $G$. For a parabolic subgroup $P$ of $G$ with Levi decomposition $P=MN$, we denote by $V(N)$ the subspace of $V$ generated by
\begin{equation*}
\left\{ \pi(n)v-v |\ n\in N,\ v\in V\right\}.
\end{equation*}
We also denote $V_{N}=V/V(N)$. This is the space of a smooth representation $\left(\pi_{N},V_{N}\right)$ of $M$, called the Jacquet module of $\left(\pi,V\right)$ along $N$.

Let $P_\phi=M_\phi N_\phi$ be a minimal parabolic subgroup.  Let $T_\phi=Z\left(M_\phi\right)$ be the center of $M_\phi$. This is the maximal split torus of $G$. Let $\Sigma$ be the set of all roots corresponding to the pair $\left(G,T_\phi\right)$. i.e., the non-trivial eigencharacters of the adjoint action of $T_\phi$ on the lie algebra $\mathfrak{g}$ of $G$. Let $\Sigma^+$ be the subset of positive roots determined by $P_\phi$, so that $\mathfrak{n}=\bigoplus_{\alpha\in \Sigma ^+}\mathfrak{g}_\alpha$, where $\mathfrak{g}_\alpha$ is the eigenspace of $\alpha$, and $\mathfrak{n}$ is the lie algebra of $N_\phi$. Let $\Delta$ be the basis of $\Sigma^+$, so that every root in $\Sigma^+$ is a sum of roots in $\Delta$.

For $\Theta\subseteq \Delta$, let $T_\Theta$ be the connected component of the identity in $\bigcap _{\alpha \in \Theta} \ker(\alpha)$, $M_\Theta= Z_G(T_\Theta)$ the centralizer of $T_\Theta$ in $G$ (so $T_\Theta=Z(M_\Theta)$ the center of $M_\Theta$), and $P_\Theta=M_\Theta N_\Theta$ the standard parabolic corresponding to $\Theta$, where $N_\Theta$ its unipotent radical.

For each $\Theta\subseteq \Delta$ and $0<\varepsilon\leq 1$, we define
\begin{equation*}
{_{\Theta}T_{\phi}^{-}}\left(\varepsilon\right)=\left\{ a\in T_\phi\left| \begin{array}{l}
\left|\alpha\left(a\right)\right| \leq\varepsilon,\ \forall\alpha\in\Delta\backslash \Theta \\ \varepsilon < \left|\alpha\left(a\right)\right|\leq 1,\ \forall\alpha\in \Theta
\end{array} \right.\right\},
\end{equation*}
and $T_{\Theta}^{-}\left(\varepsilon\right)={_{\Theta}T_{\phi}^{-}}\left(\varepsilon\right)\cap T_{\Theta}$. These are subsets of
\begin{equation*}
	T_\phi^-=\left\{a\in T_\phi|\ \left|\alpha(a)\right|\leq 1,\ \forall \alpha\in \Delta\right\}.
\end{equation*}
For any $0<\varepsilon \leq 1$ one sees that $T_\phi^-$ is the disjoint union of the ${_{\Theta}T_{\phi}^{-}}\left(\varepsilon\right)$ as $\Theta$ ranges over all subsets of $\Delta$.	
Moreover, by \cite[Proof of Lemma 2.3.2]{beuzart2012conjecture} we have the following decomposition of $T_\phi^-$.
\begin{intseclem}\label{torus_decomp_lem}
	Let $v\in V$. Let $K_v$ be an open subgroup of
	\begin{equation*}
		T_1 = \left\{a\in T_\phi \left| |\alpha(a)|=1,\ \forall \alpha\in \Delta\right. \right\},
	\end{equation*}
	that stabilizes $v$. For all $0<\varepsilon< 1$,
	\begin{equation}\label{cone_decomp}
		{_{\Theta}T_{\phi}^{-}}\left(\varepsilon\right)=\bigcup_{\gamma\in \Gamma_\Theta^\varepsilon}T_{\Theta}^{-}\left(\varepsilon\right)\gamma K_v,
	\end{equation}
	where $\Gamma_\Theta^\varepsilon\subseteq T_\phi^-$ is a finite set.
%	For $a\in {_{\Theta}T_{\phi}^{-}}\left(\varepsilon\right)$, the corresponding $b\in T_{\Theta}^{-}\left(\varepsilon\right)$ in the decomposition \eqref{cone_decomp}, is such that $\alpha(a)=\alpha(b)$ for all $\alpha\in\Theta$, and $a_\ell=b_r$, where $\ell$ and $r$ are such that $T_\phi\cong \left(k^\times\right)^\ell$ and $T_{\Theta}^{-}\left(\varepsilon\right)\cong \left(k^\times\right)^r$ by the maps $a\mapsto(a_1,\ldots,a_\ell)$ and $\mapsto(b_1,\ldots,b_r)$.
	Hence,
	\begin{equation*}
	T_\phi^-=\bigcup _{\Theta\subseteq \Delta }\bigcup_{\gamma\in \Gamma_\Theta^\varepsilon}T_{\Theta}^{-}\left(\varepsilon\right)\gamma K_v.
	\end{equation*}
\end{intseclem}

Let $N^{-}_\Theta$ be the unipotent radical opposite to $N_\Theta$. We define a canonical non-degenerate pairing of $V_{N_\Theta}$ with $\tilde{V}_{N^{-}_\Theta}$ according to the formula
\begin{equation*}
\langle u_\Theta,\tilde{u}_\Theta\rangle_{N_\Theta}=\langle v,\tilde{v}\rangle,
\end{equation*}
where $v\in V,\tilde{v}\in\tilde{V}$ are any two canonical lifts of $u_\Theta,\tilde{u}_\Theta$.
We have the following theorem from \cite[Corollary 4.3.4]{casselman1993introduction}.
\begin{intTheorem}[Casselman]\label{Theorem:Casselman}
	Let $v\in V$ and $\tilde{v}\in\tilde{V}$ be given. For $\Theta\subseteq \Delta$, let $u_\Theta,\ \tilde{u}_\Theta$ be their images in $V_{N_\Theta},\ \tilde{V}_{N^{-}_\Theta}$. There exists $\varepsilon>0$ such that for any $\Theta\subseteq \Delta$ and  $a\in {_{\Theta}T_{\phi}^{-}}(\varepsilon)$ one has 
\begin{equation*}
	\left\langle \pi(a)v,\tilde{v}\right\rangle =\left\langle \pi_{N_\Theta}(a)u_\Theta,\tilde{u}_\Theta\right\rangle _{N_\Theta}.
\end{equation*}

\end{intTheorem}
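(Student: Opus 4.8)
The plan is to reduce to the known structure theory of Jacquet modules via the Iwasawa/Cartan decompositions and the filtration on $V$ coming from the $N_\Theta$-action, following the standard "finiteness + separation of characters" strategy underlying Casselman's canonical pairing. The main point is that the matrix coefficient $a\mapsto\langle\pi(a)v,\tilde v\rangle$ is, for $a$ deep enough in $A_\Theta^-$, controlled entirely by the image of $v$ in $V_{N_\Theta}$ and of $\tilde v$ in $\tilde V_{N_\Theta^-}$, because the "error terms" live in $V(N_\Theta)$ and are killed by pairing against anything coming from $\tilde V_{N_\Theta^-}$ once $a$ contracts $N_\Theta$ sufficiently.

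**First I would** record the two inputs I intend to use as black boxes. (i) \emph{Jacquet's lemma / Jacquet module finiteness}: for a smooth admissible $V$ and a compact open $N_1\subseteq N_\Theta$, the projection $V^{N_1}\to V_{N_\Theta}$ is surjective, and more precisely there is a compact open subgroup such that $V(N_\Theta)\cap V^{K_1} = \big(\sum_{n\in N_1}(\pi(n)-1)\big)V^{K_1}$ for a suitable $N_1$; in particular $v = v_0 + v'$ with $v_0\in V^{N_1}$ lifting $u_\Theta$ canonically and $v'\in V(N_\Theta)$. (ii) \emph{Contraction}: for $a\in A_\Theta^-(\varepsilon)$ with $\varepsilon$ small, conjugation by $a$ shrinks a fixed compact open subgroup of $N_\Theta$ into an arbitrarily small one, while it expands the corresponding subgroup of $N_\Theta^-$. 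Then I would argue: pick $\tilde v$, let $\tilde K$ be an open subgroup fixing it; choose $N_1\subseteq N_\Theta$ open compact small enough that $\tilde v$ is fixed by $N_1$ \emph{and} by $a N_1 a^{-1}$ for all relevant $a$ (possible by the contraction property, since $a$ contracts $N_\Theta$); write $v'\in V(N_\Theta)$ as a finite sum $\sum (\pi(n_i)-1)w_i$ with $n_i\in N_1$. Since $a^{-1}n_i a\in N_1$ as well (same contraction), $\langle \pi(a)(\pi(n_i)-1)w_i,\tilde v\rangle = \langle (\pi(an_ia^{-1})-1)\pi(a)w_i,\tilde v\rangle = 0$ because $\tilde v$ is $aN_1a^{-1}$-fixed. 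Hence $\langle\pi(a)v,\tilde v\rangle = \langle\pi(a)v_0,\tilde v\rangle$, i.e. only the canonical lift $v_0$ matters.

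**Next**, symmetrically, I would replace $\tilde v$ by a canonical lift $\tilde v_0 \in \tilde V^{N_2^-}$ of $\tilde u_\Theta$ for a small compact open $N_2^-\subseteq N_\Theta^-$, with an error in $\tilde V(N_\Theta^-)$; this time I exploit that $a$ \emph{expands} $N_\Theta^-$, equivalently $a^{-1}$ contracts it, so pairing $\pi(a)v_0$ against the $\tilde V(N_\Theta^-)$-part vanishes once $v_0$ is fixed by a large enough compact open subgroup of $N_\Theta^-$ — which holds because $v_0$ is a fixed vector of some open $K$ and we only need $a$-conjugates of a fixed piece. After both reductions, $\langle\pi(a)v,\tilde v\rangle = \langle\pi(a)v_0,\tilde v_0\rangle$ with $v_0,\tilde v_0$ canonical lifts; but by the very definition of the canonical pairing on $V_{N_\Theta}\times\tilde V_{N_\Theta^-}$ and the fact that $\pi(a)v_0$ again projects to $\pi_{N_\Theta}(a)u_\Theta$ (as $a\in A\subseteq M_\Theta$ normalizes $N_\Theta$ and the projection is $M_\Theta$-equivariant), this equals $\langle \pi_{N_\Theta}(a)u_\Theta,\tilde u_\Theta\rangle_{N_\Theta}$, which is the claim. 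The uniformity in $\Theta$ comes from $\Delta$ being finite: take $\varepsilon$ the minimum over the finitely many $\Theta$'s of the thresholds produced above.

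**The hard part** will be making the "contraction/expansion" bookkeeping precise and \emph{uniform in $a$ ranging over the unbounded set $A_\Theta^-(\varepsilon)$} rather than over a single $a$: one must choose the compact open subgroups $N_1,N_2^-$ once and for all (depending on $v,\tilde v$) so that $aN_1a^{-1}\subseteq N_1$ for \emph{every} $a\in A_\Theta^-(\varepsilon)$ — this is where the condition $|\alpha(a)|\le\varepsilon$ for $\alpha\notin\Theta$, together with $|\alpha(a)|\le 1$ for $\alpha\in\Theta$, is exactly what is needed, since the root subgroups of $N_\Theta$ are indexed by roots that are \emph{not} nonnegative combinations of $\Theta$ alone, so every such root has a strictly positive coefficient on some $\alpha\in\Delta\setminus\Theta$ and is therefore contracted by any $a\in A_\Theta^-(\varepsilon)$. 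Pinning down this combinatorial statement about $N_\Theta$-roots versus $A_\Theta^-(\varepsilon)$, and the parallel statement for $N_\Theta^-$ under $a^{-1}$, is the only real content; the rest is the standard smooth-representation manipulation of replacing vectors by their canonical lifts modulo $V(N)$.
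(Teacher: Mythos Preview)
The paper does not contain a proof of this statement: Theorem~A (Casselman) is quoted verbatim from \cite[Corollary 4.3.4]{casselman1993introduction} and used as a black box in the proof of the paper's own Theorem~1. There is therefore no ``paper's own proof'' to compare your proposal against.

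That said, your sketch is essentially the standard argument one finds in Casselman's notes, so in that sense it is the right approach. One small wrinkle in your write-up: you require $\tilde v$ to be fixed by $N_1$ itself, but the compact open $N_1$ is dictated by $v'$ (it is the subgroup for which $v'\in V(N_1)$), and there is no reason it lies inside the stabilizer of $\tilde v$. What you actually need, and what the contraction gives, is only that $aN_1a^{-1}$ is contained in the stabilizer of $\tilde v$ once $\varepsilon$ is small enough; your displayed computation already uses exactly this, so the argument goes through once you drop the stray requirement that $\tilde v$ be $N_1$-fixed. Similarly, the parenthetical ``$a^{-1}n_ia\in N_1$'' has the conjugation in the wrong direction; the relevant containment is $an_ia^{-1}\in aN_1a^{-1}\subseteq \mathrm{Stab}(\tilde v)$.
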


For a subgroup $H$ of $G$, we say that a function is an $H$-finite function (or simply finite) if the space spanned by its right $H$-translations is finite dimensional.
By Jacquet and Langlands \cite[Lemma 8.1]{jacquet2006automorphic} we have an explicit basis for all $H$-finite function when $H$ is a locally compact abelian group.
\begin{intTheorem}[Jacquet and Langlands]\label{Theorem:jac_lang_thrm}
	Let $H$ be a locally compact abelian group of the form 
	\begin{equation*}
		H=K\times\bbz^{r}\times\bbr^{n}
	\end{equation*}
	where $K$ is a compact group. For $1\leq i\leq r+n$ let $\xi_{i}:\left(h_{0},x_{1},\ldots,x_{r+n}\right)\to x_{i}$ be the projection map. Then, for any sequence of non-negative integers $p_{1},\ldots,p_{r+n}$ and any quasi-character $\chi$ of $H$, the function $\chi\prod_{i=1}^{r+n}\xi_{i}^{p_{i}}$ is continuous and finite. These functions form a basis of the space of continuous finite functions on $H$.
\end{intTheorem}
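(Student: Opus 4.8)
The plan is to prove the two halves of the statement separately: first that each function $\chi\prod_{i=1}^{r+n}\xi_i^{p_i}$ is continuous and finite and that these functions are linearly independent, and then that they span the space of all continuous finite functions on $Z$. For the first half, continuity is immediate since a quasi-character of a locally compact abelian group is continuous and the $\xi_i$ are continuous. For finiteness, note that for $z=(z_0,x_1,\dots,x_{r+n})\in Z$ the right translate of $\xi_i$ is $\xi_i+\xi_i(z)$, so the right translate of $\prod_i\xi_i^{p_i}$ lies in the finite-dimensional span of the monomials $\prod_i\xi_i^{q_i}$ with $0\le q_i\le p_i$; since moreover $\chi(wz)=\chi(z)\chi(w)$, the right translates of $\chi\prod_i\xi_i^{p_i}$ span a finite-dimensional space. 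Linear independence is the standard statement that distinct quasi-characters are linearly independent (an Artin-type argument) combined with the linear independence of the monomials $\prod_i\xi_i^{p_i}$, obtained after dividing out a common $\chi$.

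For the converse, let $f$ be a continuous finite function and let $W\subseteq C(Z)$ be the finite-dimensional span of its right translates $R_z f$, $z\in Z$, where $(R_z g)(w)=g(wz)$. Right translation makes $W$ a finite-dimensional representation $\rho$ of the abelian group $Z$, continuous because $f$ is; and evaluation at the identity, $\lambda_e(g)=g(e)$, is a linear functional on $W$ with $g(z)=\lambda_e(\rho(z)g)$ for all $g\in W$, so in particular $f(z)=\lambda_e(\rho(z)f)$. Thus it suffices to show that every matrix coefficient of a finite-dimensional continuous representation of $Z$ is a finite linear combination of the functions $\chi\prod_i\xi_i^{p_i}$.

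To see this, decompose $\rho$. Since $K$ is compact and abelian, $\rho|_K$ is semisimple and $W=\bigoplus_\lambda W_\lambda$ splits into $K$-character eigenspaces, each preserved by $\rho(\bbz^r\times\bbr^n)$ since the factors commute. The images under $\rho$ of the standard generators of $\bbz^r$ are commuting invertible operators, and each continuous one-parameter subgroup $t\mapsto\rho(te_j)$ of $GL(W)$ is of the form $\exp(tX_j)$ for a single operator $X_j$; all these operators commute. Passing to simultaneous generalized eigenspaces refines the decomposition to $W=\bigoplus_\beta W_\beta$, where on $W_\beta$ the group $K$ acts by a unitary character $\lambda$, the $i$-th generator of $\bbz^r$ acts as $\mu_i(\mathrm{id}+N_i)$ with $\mu_i\in\bbc^\times$, and $X_j=c_j\,\mathrm{id}+M_j$ with $c_j\in\bbc$, the $N_i,M_j$ being commuting nilpotents. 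Hence on $W_\beta$, for $z=(z_0,x_1,\dots,x_{r+n})$,
\[
\rho(z)|_{W_\beta}=\chi_\beta(z)\cdot\prod_{i=1}^{r}(\mathrm{id}+N_i)^{x_i}\cdot\prod_{j=1}^{n}\exp(x_{r+j}M_j),
\]
where $\chi_\beta(z)=\lambda(z_0)\prod_{i=1}^{r}\mu_i^{x_i}\prod_{j=1}^{n}e^{c_jx_{r+j}}$ is precisely a quasi-character of $Z$. Because the $N_i,M_j$ are nilpotent, $(\mathrm{id}+N_i)^{x_i}=\sum_p\binom{x_i}{p}N_i^p$ and $\exp(x_{r+j}M_j)=\sum_p\frac{x_{r+j}^p}{p!}M_j^p$ are finite sums whose matrix entries are polynomials in $x_i=\xi_i(z)$ and $x_{r+j}=\xi_{r+j}(z)$. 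Therefore, writing $f=\sum_\beta f_\beta$ along this decomposition, the matrix coefficient $\lambda_e(\rho(z)f_\beta)$ equals $\chi_\beta(z)\,P_\beta(\xi_1,\dots,\xi_{r+n})$ for some polynomial $P_\beta$; summing over $\beta$ exhibits $f$ as a finite linear combination of the functions $\chi\prod_i\xi_i^{p_i}$, as desired.

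The main point requiring care is the simultaneous generalized eigenspace decomposition of the pairwise-commuting operators coming from $K$, $\bbz^r$ and $\bbr^n$, together with the bookkeeping that shows that, once the quasi-character $\chi_\beta$ is factored out, the remaining unipotent part has matrix coefficients that are genuine polynomials in the coordinate functions $\xi_i$. This step also relies on the standard facts that finite-dimensional continuous representations of a compact group are semisimple and that continuous one-parameter subgroups of $GL(W)$ are real-analytic of the form $\exp(tX)$.
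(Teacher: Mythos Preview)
The paper does not supply its own proof of this statement: Theorem~B is quoted from \cite[Lemma~8.1]{jacquet2006automorphic} and used as a black box, so there is no argument in the paper to compare your attempt against.

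That said, your proof is correct and is essentially the standard one. Realizing a continuous finite function $f$ as a matrix coefficient $f(z)=\lambda_e(\rho(z)f)$ of the finite-dimensional translation representation on $W=\mathrm{span}\{R_z f\}$, then decomposing $W$ via semisimplicity of $\rho|_K$ and a simultaneous generalized-eigenspace (Jordan) decomposition for the commuting operators coming from $\bbz^r$ and $\bbr^n$, is exactly the right mechanism. The two technical inputs you flag---that continuous finite-dimensional representations of a compact group are semisimple, and that a continuous one-parameter subgroup of $GL(W)$ is of the form $t\mapsto\exp(tX)$---are standard, and the commutativity of the nilpotent parts $N_i,M_j$ on each simultaneous generalized eigenspace follows because the semisimple parts are scalars there. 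Your treatment of linear independence is brief but accurate: separate the $K$-components by character orthogonality, then on $\bbz^r\times\bbr^n$ use that functions of the form $\mu^{x}x^{p}$ (respectively $e^{cy}y^{q}$) with distinct $(\mu,p)$ (respectively $(c,q)$) are linearly independent, a Wronskian/difference-operator fact.
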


Our main goal is to show how \Cref{Theorem:Casselman} and \Cref{Theorem:jac_lang_thrm} give an asymptotic expansion for matrix coefficients in terms of a finite linear combination of $T_\Theta$-finite functions.
In detail, 

\begin{Theorem}\label{asym_exp_for_all_prop}	
	Let $v\in V$ and $\tilde{v}\in\tilde{V}$. There exists $\varepsilon>0$ such that for any $\Theta\subseteq \Delta$ and  $a\in {_{\Theta}T_{\phi}^{-}}(\varepsilon)$ there exist finite sets of vectors, that depend on $\{\pi, v, \tilde{v}\}$, $\underline{p'}=\left(p'_{1},\ldots,p'_{r}\right)\in \bbr^r,\ \underline{p}=\left(p_{1},\ldots,p_{r}\right)\in \bbz^r_{\ge 0}$, and $\underline{\chi}=\left(\chi_{1},\ldots,\chi_{r}\right)$ where for all $1\leq i\leq r$, $\chi_i:k^\times \to \bbc^\times$ is a unitary character, so that
	\begin{equation}\label{asym_exp_p_adic}
	\left\langle \pi(a)v,\tilde{v}\right\rangle = \sum_{\underline{p'},\underline{p},\underline{\chi}}\alpha_{\underline{p'},\underline{p},\underline{\chi}}\prod_{i=1}^{r}\chi_i(b_i)\left|b_{i}\right|^{p'_{i}}\log_q^{p_{i}}\left|b_{i}\right|,
	\end{equation}
	where $r$ is such that $T_{\Theta}^{-}\left(\varepsilon\right)\cong \left(k^\times\right)^r$, $b\in T_{\Theta}^{-}\left(\varepsilon\right)$ identified with $(b_1,\ldots,b_r)$ and corresponding to $a$ as in \Cref{torus_decomp_lem}, and $\alpha_{\underline{p'},\underline{p},\underline{\chi}}\in \bbc$ are such that $\alpha_{\underline{p'},\underline{p},\underline{\chi}}=0$ for all but finitely many  $\underline{p'},\underline{p},\underline{\chi}$.
\end{Theorem}

\Cref{main_thm_proof_sec} is devoted to the proof of the following proposition.
\begin{prop}\label{prop:asym_exp}
	Let $\Theta\subseteq \Delta$, $u_\Theta\in V_{N_\Theta}$ and $\tilde{u}_\Theta\in\tilde{V}_{N_\Theta^-}$. Let $r$ be such that $T_{_\Theta}\cong \left(k^\times\right)^r$ by the map $b\mapsto\left(b_1,\ldots, b_r\right)$. There exist finite sets of vectors, that depend on $\{\pi, u_\Theta, \tilde{u}_\Theta\}$, $\underline{p'}=\left(p'_{1},\ldots,p'_{r}\right)\in \bbr^r,\ \underline{p}=\left(p_{1},\ldots,p_{r}\right)\in \bbz^r_{\ge 0}$, and $\underline{\chi}=\left(\chi_{1},\ldots,\chi_{r}\right)$, where for all $1\leq i\leq r$, $\chi_i:k^\times \to \bbc^\times$ is a unitary character, such that for all $b\in T_{_\Theta}$, one has
	\begin{equation*}
	\left\langle \pi_{N_\Theta}(b)u_\Theta,\tilde{u}_\Theta\right\rangle_{N_\Theta} =\sum_{\underline{p'},\underline{p},\underline{\chi}}\alpha_{\underline{p'},\underline{p},\underline{\chi}}\prod_{i=1}^{r}\chi_i(b_i)\left|b_{i}\right|^{p'_{i}}\log_q^{p_{i}}\left|b_{i}\right|,
	\end{equation*}
	where $\alpha_{\underline{p'},\underline{p},\underline{\chi}}\in \bbc$ such that $\alpha_{\underline{p'},\underline{p},\underline{\chi}}=0$ for all but finitely many  $\underline{p'},\underline{p},\underline{\chi}$.
\end{prop}

The proof of \Cref{asym_exp_for_all_prop} is followed immediately from \Cref{prop:asym_exp} and \Cref{torus_decomp_lem}. Indeed,
\begin{proof}[Proof of \Cref{asym_exp_for_all_prop}]
	Let $\Theta\subseteq \Delta$ and  $a\in {_{\Theta}T_{\phi}^{-}}(\varepsilon)$ . Using the decomposition \eqref{cone_decomp}, for any $0<\varepsilon\leq 1$, there exist $b\in T_{\Theta}^{-}\left(\varepsilon\right)$ (corresponding to $a$ as in \Cref{torus_decomp_lem}), $\gamma\in T_\phi^-$, and $k_1\in K_v$, such that we can write $a=b\gamma k_1$. Hence,
	\begin{equation*}
	\left\langle \pi(a)v,\tilde{v}\right\rangle=\left\langle \pi(b\gamma k_1)v,\tilde{v}\right\rangle=\left\langle \pi(b\gamma)v,\tilde{v}\right\rangle.
	\end{equation*}
	Since $b\in T_{\Theta}^{-}\left(\varepsilon\right)$ and $\gamma \in T_\phi^-$, we have $b\gamma \in T_{\Theta}^{-}\left(\varepsilon\right)$. Thus, we can apply \Cref{Theorem:Casselman}, i.e. there exists $\varepsilon>0$ such that 
	\begin{equation*}
	\left\langle \pi(b\gamma)v,\tilde{v}\right\rangle=\left\langle \pi_{N_\Theta}(b\gamma)u_\Theta,\tilde{u}_\Theta\right\rangle _{N_\Theta}.
	\end{equation*}
	We  write
	\begin{equation*}
	\left\langle \pi_{N_\Theta}(b\gamma)u_\Theta,\tilde{u}_\Theta\right\rangle _{N_\Theta}=\left\langle \pi_{N_\Theta}(b)\left(\pi_{N_\Theta}(\gamma)u_\Theta\right),\tilde{u}_\Theta\right\rangle_{N_\Theta}.
	\end{equation*}
	We have $b\in T_{\Theta}^{-}\left(\varepsilon\right)\subseteq T_\Theta$, so the result is obtained by applying \Cref{prop:asym_exp}, where $u_\Theta$ is replaced with $\pi_{N_\Theta}(\gamma)u_\Theta$.
\end{proof}

We note that \Cref{torus_decomp_lem} does not give an explicit expression of $b\in T_{\Theta}^{-}\left(\varepsilon\right)$ in terms of its correspondent $a\in {_{\Theta}T_{\phi}^{-}}(\varepsilon)$. In \Cref{gln_case_sec} we give a constructed proof of \Cref{torus_decomp_lem} for the case $G=\mathrm{GL}_n$ for any positive integer $n$. This allows us to write an explicit asymptotic expansion of the matrix coefficient at $a$ in terms of its coordinates.

\section{Proof of \Cref{prop:asym_exp}}\label{main_thm_proof_sec}

First, we use \Cref{Theorem:jac_lang_thrm} to write an explicit basis for all finite function in our case,
\begin{cor}\label{cor:ourfinitefunc}
	Let $\Theta \subseteq \Delta$ and $f:T_\Theta\to\bbr$ a continuous finite function. Let $r$ be such that $T_{\Theta}\cong \left(k^\times\right)^r$ by the map $b\mapsto\left(b_1,\ldots, b_r\right)$. Then, for all $b\in T_\Theta$, $f(b)$ is spanned by
	\begin{equation*}
		\prod_{i=1}^{r}\chi_i(b_i)\left|b_i\right|^{p'_i}\log_q ^{p_i}\left|b_i\right|,
	\end{equation*}	
	where $\left(p'_{1},\ldots,p'_{r}\right)\in \bbr^r,\ \left(p_{1},\ldots,p_{r}\right)\in \bbz^r_{\ge 0}$, and for all $1\leq i\leq r$, $\chi_i:k^\times \to \bbc^\times$ is a unitary character.
\end{cor}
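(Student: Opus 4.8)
The plan is to unwind the structure of $k^\times$ and apply \Cref{Theorem:jac_lang_thrm} coordinate by coordinate. First I would recall that $k^\times \cong \ocal^\times \times \bbz$, where the $\bbz$-factor is generated by the uniformizer $\pomega$ (so the projection $k^\times \to \bbz$ is $x \mapsto v(x) = -\log_q|x|$), and $\ocal^\times$ is a profinite, hence compact, group. Consequently $A_M \cong (k^\times)^r \cong (\ocal^\times)^r \times \bbz^r$, which is exactly of the form $Z = K \times \bbz^r \times \bbr^0$ with $K = (\ocal^\times)^r$ compact and $n=0$. Thus \Cref{Theorem:jac_lang_thrm} applies verbatim: every continuous finite function on $A_M$ is a finite linear combination of functions $\chi \cdot \prod_{i=1}^r \xi_i^{p_i}$, where $\chi$ is a quasi-character of $A_M$, the $\xi_i$ are the $r$ integer-coordinate projections, and $p_i \in \bbz_{\ge 0}$.

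Next I would translate this basis into the stated form. Under the identification above, the $i$-th coordinate projection $\xi_i$ sends $(a_1,\dots,a_r) \mapsto v(a_i) = -\log_q|a_i|$, so $\prod_i \xi_i^{p_i}$ becomes $\prod_i (-1)^{p_i}\log_q^{p_i}|a_i|$, and the sign can be absorbed into the spanning coefficient. For the quasi-character $\chi$ of $A_M \cong (k^\times)^r$: it factors as $\chi(a_1,\dots,a_r) = \prod_i \chi^{(i)}(a_i)$ for quasi-characters $\chi^{(i)}$ of $k^\times$, and each $\chi^{(i)}$ decomposes as $\chi^{(i)} = \chi_i \cdot |\cdot|^{s_i}$ with $\chi_i$ unitary (trivial on $\pomega$, i.e.\ determined by its restriction to $\ocal^\times$) and $s_i \in \bbc$; writing $s_i = p'_i + it_i$ with $p'_i, t_i \in \bbr$, the imaginary part $|\cdot|^{it_i}$ is itself unitary and can be folded into $\chi_i$, leaving $|\cdot|^{p'_i}$ with $p'_i \in \bbr$. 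This yields precisely $\prod_{i=1}^r \chi_i(a_i)|a_i|^{p'_i}\log_q^{p_i}|a_i|$ as a spanning set, with $\chi_i$ unitary, $p'_i \in \bbr$, $p_i \in \bbz_{\ge 0}$, as claimed.

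There is no serious obstacle here — the corollary is essentially a dictionary entry. The only point requiring minor care is the bookkeeping of which pieces of a quasi-character count as "unitary": one must check that $x \mapsto |x|^{it}$ is unitary and that a quasi-character of $k^\times$ trivial on a choice of uniformizer together with unitary on $\ocal^\times$ exhausts the unitary characters, so that the residual exponent is the real number $p'_i$. A secondary point is that \Cref{Theorem:jac_lang_thrm} is stated for real-valued (or continuous $\bbc$-valued) finite functions on $Z$; since we only need it as a spanning statement over $\bbc$ for the $\bbr$-valued $f$, complexifying causes no issue. I would therefore present the proof as: (i) identify $A_M$ with $K \times \bbz^r$; (ii) quote \Cref{Theorem:jac_lang_thrm}; (iii) rewrite $\xi_i$ via $-\log_q|\cdot|$ and split each quasi-character into unitary part times $|\cdot|^{p'_i}$.
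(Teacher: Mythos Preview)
Your proposal is correct and follows essentially the same route as the paper's own proof: identify $A_M\cong(\ocal^\times)^r\times\bbz^r$, apply \Cref{Theorem:jac_lang_thrm}, rewrite $\xi_i$ as $\log_q|a_i|$ (up to sign), and split each quasi-character of $k^\times$ into a unitary part times $|\cdot|^{p'_i}$ by absorbing the imaginary exponent. Your write-up is in fact a bit more explicit than the paper's (you spell out the factorization $\chi=\prod_i\chi^{(i)}$ and the sign absorption), but there is no substantive difference in strategy.
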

\begin{proof}
	We have $k^{\times}\cong\ocal^{\times}\times\bbz$ by the map $x\mapsto \left(\frac{x}{\left|x\right|},\log_q\left|x\right|\right)$. A character of $k^\times$ is of the form $\chi'(\cdot) \left|\cdot \right|^s$, where $\chi'$ is a unitary character and $0\neq s\in \bbc$. We can assume $s\in\bbr$ by attaching the imaginary part to $\chi'$. Let $(b_1,\ldots,b_r)\in  \left(k^\times\right)^r$ be the image of $b\in T_\Theta$. Let $\chi$ be a quasi character of $\left(\ocal^\times\right)^r$. Then $\chi(b)=\prod_{i=1}^{r}\chi_i(b_i)\left|b_i\right|^{p'_i}$, where  $\chi_i$ is unitary and $p'_i\in \bbr$ for all $1\leq i\leq r$.
	Applying \Cref{Theorem:jac_lang_thrm} for $H=\left(k^\times\right)^r$ implies that the space of continuous finite function on $T_\Theta$ is spanned by $\chi\prod_{i=1}^{r}\xi_{i}^{p_{i}}$, where $\chi$ is a quasi character of $\left(\ocal^\times\right)^r$ and for all $1\leq i\leq r$, $\xi_i$ is the projection map to each coordinate of $\bbz^r$, i.e. $\xi_i(b)=\log_q\left|b_i\right|$. Therefore, this space is spanned by $\prod_{i=1}^{r}\chi_i(b_i)\left|b_i\right|^{p'_i}\log_q\left|b_i\right|^{p_i}$.
\end{proof}

In order to deduce \Cref{prop:asym_exp} from  \Cref{Theorem:Casselman} and \Cref{cor:ourfinitefunc}, it is left to show that for each $\Theta\subseteq \Delta$, the function $x\mapsto\left\langle \pi_{N_\Theta}(x)u_\Theta,\tilde{u}_\Theta\right\rangle _{N_\Theta}$ is a $T_\Theta$-finite function. Namely,

\begin{prop}\label{prop:The-space-spanned}
	Let $u_\Theta\in V_{N_\Theta}$, $\tilde{u}_\Theta\in\tilde{V}_{N_\Theta^{-}}$. There exist finite sets $\{b_i\}_{1\leq i\leq \ell}\subseteq  T_\Theta$ and $\{c_i(b)\}_{1\leq i\leq \ell}\subseteq \bbc$, such that for all $b\in T_\Theta$ and $m\in M_\Theta$ we have
	\begin{equation*}
		\left\langle \pi_{N_\Theta}(mb)u_\Theta,\tilde{u}_\Theta\right\rangle _{N_\Theta}=\sum_{i=1}^\ell c_i(b)\left\langle \pi_{N_\Theta}(m b_i)u_\Theta,\tilde{u}_\Theta\right\rangle _{N_\Theta}.
	\end{equation*}
\end{prop}

Before proving \Cref{prop:The-space-spanned} we need the following lemma.

\begin{seclem}\label{seclem:cent_gen_fin}
	Let $R$ be a group with center $Z\left(R\right)\cong K\times\bbz^{r}$, where $K$ is a compact group. Denote the standard basis of $\bbz^{r}$ by $\left\{ e_{1},\ldots,e_{r}\right\} $. Let $\left(L,\sigma\right)$ be a (complex) smooth $R$-module of finite length.
	\begin{enumerate}
		\item[(i.)] For all finite dimensional spaces $W\subseteq L$ and all $1\leq j \leq r$ there exists a finite dimensional $Z(R)$-invariant space $W_j\subseteq L$, such that
		\begin{equation}\label{sigma_e_j_W}
		\sigma(e_j)W\subseteq  W+W_j.
		\end{equation}
		\item[(ii.)] For all finite dimensional spaces $W\subseteq L$, there exists a finite dimensional $Z(R)$-invariant space $W'\subseteq L$, such that
		\begin{equation*}
		\sigma(\bbz^r)W\subseteq  W+W'.
		\end{equation*}
		\item[(iii.)] Let $v\in L$. The $Z\left(R\right)$-module generated by $v$ is finite dimensional.
	\end{enumerate} 
\end{seclem}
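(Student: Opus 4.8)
The key observation is that $Z(R)$ is abelian and $\sigma$ has finite length, so we can exploit the decomposition of $H$ into generalized eigenspaces for the commuting operators $\sigma(e_1), \ldots, \sigma(e_r)$. The plan is to prove (i.), then bootstrap to (ii.) by iterating over the generators $e_1, \ldots, e_r$, and finally obtain (iii.) as an immediate consequence by taking $W = \bbc v$.

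For part (i.), I would argue as follows. Fix $1 \le j \le r$ and a finite-dimensional $W \subseteq H$. Since $H$ is a smooth $R$-module of finite length, it is admissible, hence every finitely generated submodule is noetherian; in particular the $Z(R)$-submodule $\sigma(Z(R))W$ is contained in a finitely generated $R$-submodule $H_0$ of finite length. The center $Z(R)$ acts on $H_0$ and, because $H_0$ has finite length, the operator $\sigma(e_j)$ satisfies a polynomial identity on $H_0$: there is a monic $p(t) \in \bbc[t]$ with $p(\sigma(e_j))|_{H_0} = 0$ (one may take $p$ to be a product of the ``eigenvalue'' factors coming from the finitely many irreducible subquotients on which $Z(R)$ — being central — acts by scalars via a quasicharacter). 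Then for any $w \in W$, the vectors $w, \sigma(e_j)w, \sigma(e_j)^2 w, \ldots$ span a space of dimension at most $\deg p$, and more to the point $\sigma(e_j)\bigl(\mathrm{span}\{w, \sigma(e_j)w, \ldots, \sigma(e_j)^{\deg p - 1}w\}\bigr)$ lies back in that same span. So set $W_j := \sigma(Z(\bbz^r))\bigl(\sum_{w \in \text{basis of } W}\sum_{k=0}^{\deg p - 1}\bbc\,\sigma(e_j)^k w\bigr)$ — this is finite dimensional because $\sigma(e_1), \ldots, \sigma(e_r)$ commute and each satisfies its own polynomial identity on $H_0$, so the $\bbz^r$-orbit of any vector in $H_0$ is finite-dimensional — and it is $Z(R)$-invariant by construction (note $K$ acts through a finite quotient on $H_0$ by smoothness, so including the $K$-action keeps things finite-dimensional). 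Then $\sigma(e_j)W \subseteq W_j \subseteq W + W_j$.

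For part (ii.), apply (i.) repeatedly. Starting from $W$, set $W^{(1)} = W + W_1$ using (i.) with $j=1$; this is finite-dimensional and $Z(R)$-invariant in the $e_1$-direction is not yet guaranteed, so instead I would organize the induction as: let $W' := \sum_{k_1, \ldots, k_r \ge 0}\sigma(e_1)^{k_1}\cdots\sigma(e_r)^{k_r} W$, which by the polynomial-identity argument above (all $\sigma(e_i)$ commute and are individually algebraic on the finite-length module $H_0 \supseteq \sigma(Z(R))W$) is a finite-dimensional space, and it is manifestly stable under each $\sigma(e_i)$, hence under $\sigma(\bbz^r)$; enlarging by the finite $K$-orbit makes it $Z(R)$-invariant. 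Then $\sigma(\bbz^r)W \subseteq W' \subseteq W + W'$.

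For part (iii.), take $W = \bbc v$ in (ii.) and note the $Z(R)$-module generated by $v$ is $\sigma(Z(R))\,\bbc v = \sigma(K)\sigma(\bbz^r)\,\bbc v \subseteq \sigma(K)(\bbc v + W')$, which is finite-dimensional since $W' + \bbc v$ is finite-dimensional and $K$ acts on it through a finite quotient (smoothness). The main obstacle — and the step deserving the most care — is justifying that $\sigma(e_j)$ is algebraic on a finite-dimensional-orbit-generating submodule: this rests on the fact that a smooth module of finite length is admissible and that a central element acts by a scalar on each irreducible subquotient, so one must check that $\sigma(Z(R))W$ really does land in a finite-length submodule and assemble the annihilating polynomial from the (finitely many) subquotients, handling the non-semisimplicity via the Cayley–Hamilton / Jordan–Hölder filtration rather than assuming diagonalizability.
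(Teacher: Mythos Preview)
Your approach and the paper's rest on the same underlying fact---Schur's lemma forces each central element to act by a scalar on every irreducible subquotient of $H$---but you package it differently. The paper argues by induction on the length $d$ of $H$: writing $\sigma(e_j)v = \alpha_j v + h_j$ with $h_j$ in the length-$(d-1)$ submodule $H_{d-1}$, it then applies the inductive hypothesis to the module generated by $h_j$ inside $H_{d-1}$. You instead extract directly that the product $p_j(t)=\prod_i (t-\lambda_i^{(j)})$ over the subquotient eigenvalues annihilates $\sigma(e_j)$ on all of $H$, so each $\sigma(e_j)$ is algebraic and the $\bbz^r$-orbit of any finite-dimensional $W$ has dimension at most $(\deg p_1)\cdots(\deg p_r)\cdot\dim W$. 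Your route is more conceptual and yields (ii.) and (iii.) essentially in one stroke; the price is that your part~(i.) becomes somewhat circular, since your construction of $W_j$ already invokes the full $\bbz^r$-orbit, i.e.\ essentially (ii.). Two points to tighten: first, the detour through admissibility and noetherianity is unnecessary and not obviously valid for an abstract group $R$ as in the lemma's hypotheses---simply take $H_0=H$ and read off $p_j$ from any composition series; second, you should say explicitly why the span over \emph{non-negative} exponents $k_i\ge 0$ is already stable under all of $\sigma(\bbz^r)$, including negative powers---since $\sigma(e_j)$ is invertible each $\lambda_i^{(j)}\ne 0$, so $p_j(0)\ne 0$ and $\sigma(e_j)^{-1}$ is itself a polynomial in $\sigma(e_j)$.
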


\begin{proof}
	We begin by proving part (i.).  It is sufficient to prove this part for a one dimensional space $W$ as the general case follows directly. Hence, we assume that $W$ is spanned as a $R$-module by $v\in L$. We prove this part by induction on the length of $L$. First, assume the length is $1$. i.e. $L$ is irreducible. Then, by Schur's lemma, $Z\left(R\right)$ acts on $L$ as a scalar. Thus, the $Z\left(R\right)$-module generated by $v$ is of dimension $1$ and all the parts of the lemma follow. Now, assume that the assertion is true for modules of length $d-1$. Let $L$ be a $R$-module of length $d$. That is, there exists a sequence of $R$-modules
	\begin{equation*}
	0=L_{0}\subsetneq L_{1}\subsetneq L_{2}\subsetneq\ldots\subsetneq L_{d}=L
	\end{equation*}
	such that $L_{i+1}/L_{i}$ is irreducible for all $0\leq i<d$.
	
	By the fact that $L_{d}/L_{d-1}$ is irreducible, and by Schur's lemma, for all $1\leq j\leq r$ there exists $\alpha_j\in\bbc$, such that 
	\begin{equation*}
	\sigma\left(e_{j}\right)\left(v+L_{d-1}\right)=	\alpha_j v +L_{d-1}.
	\end{equation*}
	In particular, 
	\begin{equation}\label{sigma_e_j_v}
	\sigma\left(e_{j}\right)v=\alpha_{j}v+h_{j},
	\end{equation}
	where $h_{j}\in L_{d-1}$.

	Let $w=\sum_{i=1}^{\ell}c_i\sigma(g_i) v\in W$, where $c_i\in \bbc$ and $g_i\in R$ for all $1\leq i \leq \ell$. Then, by \cref{sigma_e_j_v}
	
	\begin{equation}\label{sigma_e_j_w}
		\sigma\left(e_{j}\right)w=	\sigma\left(e_{j}\right)\left(\sum_{i=1}^{\ell}c_i\sigma(g_i) v\right)=\sum_{i=1}^{\ell}c_i\sigma(g_i)\sigma\left(e_{j}\right)v=\sum_{i=1}^{\ell}c_i\sigma(g_i)\left(\alpha_{j}v+h_{j}\right).
	\end{equation}
	Denote by $U_j$ the $R$-module spanned by $h_{j}$. In this notation, \cref{sigma_e_j_w} gives
	\begin{equation}\label{sigma_e_j_v2}
	\sigma(e_j)w\in \bbc w+U_j.
	\end{equation}
	
	We have $U_j\subseteq L_{d-1}$. Thus, by the induction hypothesis, there exists a finite dimensional $Z(R)$-invariant space $W'_j\subseteq L_{d-1}$, such that
	\begin{equation}\label{sigma_e_j_U_j}
	\sigma(e_j)U_j\subseteq  U_j+W'_j.
	\end{equation}
		
	We take $W_j=U_j+W'_j$. Thus, $U_j\subseteq W_j$, so \cref{sigma_e_j_v2} gives $\sigma(e_j)w\in \bbc w+W_j$ and by \cref{sigma_e_j_U_j} gives 
	\begin{equation*}
		\sigma(e_j)W_j=\sigma(e_j)\left(U_j+W'_j\right)\subseteq  W_j+W'_j=W_j.
	\end{equation*} 

	Next, we prove part (ii.).  Let $1\leq j\leq r$. First,
	\begin{equation*}
		\sigma(0e_j)W=W.
	\end{equation*}
	Let $0\neq n\in \bbz$. By \cref{sigma_e_j_W} we have
	\begin{align}
	\sigma(n e_j)W&\subseteq\sigma((n-\mathrm{sgn}(n))e_j)\left(W+W_j\right)\label{sigm_e_j_ne_j1}\\
	&=\sigma((n-\mathrm{sgn}(n))e_j)W+W_j \label{sigm_e_j_ne_j2}\\
	&= \ldots=\sigma(e_j)\left(W+W_j\right)\subseteq\sigma(e_j)W+W_j,\label{sigm_e_j_ne_j3}
	\end{align}
	where \cref{sigm_e_j_ne_j2} is due to the $Z(R)$-invariant property of $W_j$, and \cref{sigm_e_j_ne_j3} is followed by repeating \cref{sigm_e_j_ne_j1,sigm_e_j_ne_j2} $n$ times.
	Thus, by taking $W'=\sum_{j=1}^r W_j$ the statement readily follows.
	
	In order to deduce part (iii.) we first note that $\sigma$ is smooth, so for all $v\in L$, the space $\mathrm{sp}\{\sigma(x)v|\ x\in K\}$ is of finite dimension. Hence, for a finite dimensional space $W\subseteq L$, the space 
	\begin{equation*}
	\sigma(K)W=\mathrm{sp}\{\sigma(x)w|\ x\in K, w\in W\}
	\end{equation*}
	is also of finite dimension. It follows that $\sigma(K)\bbc v$ is finite dimensional. Therefore, by part 2 there exists $W'\subseteq L$ a finite dimensional $Z(R)$-invariant space such that
	\begin{equation*}
		\mathrm{sp}_{\bbc}\{\sigma(z)v|\ z\in Z(R)\}= \sigma(\bbz^r)\left(\sigma(K)\bbc v\right)\subseteq \sigma(K)\bbc v+W'.
	\end{equation*}
\end{proof}

We are now ready to prove \Cref{prop:The-space-spanned}.
\begin{proof}[Proof of \Cref{prop:The-space-spanned}]
The Jacquet module is a smooth $G$-module of finite length \cite[Theorems 3.3.1 and 6.3.10]{casselman1993introduction}. 
Applying part (iii.) of \Cref{seclem:cent_gen_fin}  for $R=M_\Theta$ (with $Z(R)=T_\Theta$), $L=V_{N_\Theta}$, $\sigma=\pi_{N_\Theta}$, and $v=u_\Theta\in V_{N_\Theta}$, gives that $\left\{ \pi_{N_\Theta}\left(b\right)u_\Theta|\ b\in T_{\Theta}\right\} $ is of finite dimension. Let $\left\{  \pi_{N_\Theta}\left(b_{1}\right)u_\Theta,\ldots, \pi_{N_\Theta}\left(b_{\ell}\right)u_\Theta\right\}$
be its basis. Let $b\in T_\Theta$ and $m\in M_\Theta$. Then,
\begin{equation*}
\pi_{N_\Theta}\left(b\right)u_\Theta=\sum_{i=1}^{\ell}c_{i}(b)\pi_{N_\Theta}\left(b_i\right)u_\Theta.
\end{equation*}
Therefore,
\begin{equation*}
\left\langle \pi_{N_\Theta}\left(mb\right)u_\Theta,\tilde{u}_\Theta\right\rangle _{N_\Theta}=\sum_{i=1}^{\ell}c_{i}\left(b\right)\left\langle \pi_{N_\Theta}\left(m b_i\right)u_\Theta,\tilde{u}_\Theta\right\rangle _{N_\Theta}.
\end{equation*}
\end{proof}

\section{General linear groups}\label{gln_case_sec}

Let $n$ be a positive integer.
We provide a proof of \Cref{torus_decomp_lem} for the $\mathrm{GL}_n$ case. This will allow us to express  $b\in T_{\Theta}^{-}\left(\varepsilon\right)$, corresponding to $a\in T_\phi^-$ in the decomposition \eqref{cone_decomp}, in terms of the coordinates of $a$.
\begin{proof}[Proof of \Cref{torus_decomp_lem} (for $\mathrm{GL}_n$)]
	Let $0<\varepsilon<1$, $\Theta\subseteq \Delta$, and $a\in {_{\Theta}T_{\phi}^{-}}\left(\varepsilon\right)$. There exists a positive integer $\ell$ such that  $T_\phi^-\cong \left(k^\times\right)^\ell$. Under this isomorphism we identify $a$ with $(a_1,\ldots,a_\ell)$. 
	We denote $\Delta=\{\alpha_1,\ldots,\alpha_{\ell-1}\}$, where for all $1\leq i \leq \ell-1$, $\alpha_i(a)=\frac{a_i}{a_{i+1}}$. Denote $I_\Theta=\{i_1,\ldots,i_m\}\subseteq \{1,\ldots,\ell-1\}$, such that $\Theta=\{\alpha_{i_1},\ldots,\alpha_{i_m}\}$. We write $\{1,\ldots,\ell-1\}\backslash I_\Theta =\{i_{m+1},\ldots,i_{\ell-1}\}$. Thus, 
	\begin{equation}\label{roots_valuation_gln}
	\begin{cases}
		\varepsilon<|\frac{a_{i_j}}{a_{i_j+1}}|\leq 1,&\forall 1\leq j\leq m,\\
		|\frac{a_{i_j}}{a_{i_j+1}}|\leq \varepsilon,&\forall m+1\leq j \leq \ell-1.
	\end{cases}
	\end{equation}
	We take $a'$ identified with $(a'_1,\ldots,a'_\ell)$ such that for all $1\leq i\leq \ell$, the following change of variables is satisfied $a_i=\prod_{j=i}^{\ell}a'_j$.
	We have $\alpha_i(a)=a'_i$. Therefore, \eqref{roots_valuation_gln} gives
	\begin{equation}\label{roots_valuation_gln_var_change}
	\begin{cases}
		\varepsilon<|a'_{i_j}|\leq 1,&\forall 1\leq j\leq m,\\
		|a'_{i_j}|\leq \varepsilon,&\forall m+1\leq j \leq \ell-1.
	\end{cases}
	\end{equation}

	We take $b$ identified with $(b_1,\ldots,b_\ell)$ as follows. For $1\leq i\leq \ell$, 
	\begin{equation*}
		b_i= a'_\ell\prod_{\substack{i_j\geq i\\m < j < \ell}} a'_{i_j}.
	\end{equation*}
	Note that $b_\ell=a'_\ell=a_\ell$.
	Let $1\leq j'\leq m$. Then,
	\begin{equation*}
		\alpha_{i_{j'}}(b)=\frac{b_{i_{j'}}}{b_{i_{j'}+1}}=\frac{a'_\ell\prod\limits_{\substack{i_j\geq i_{j'}\\m < j < \ell}} a'_{i_j}}{a'_\ell\prod\limits_{\substack{i_j\geq i_{j'}+1\\m < j < \ell}} a'_{i_j}}=1,
	\end{equation*}
	i.e. $\alpha(b)=1$ for all $\alpha\in \Theta$.
	 Let $m < j' <\ell$. Then,
	\begin{equation*}
	\alpha_{i_{j'}}(b)=\frac{b_{i_{j'}}}{b_{i_{j'}+1}}=\frac{a'_\ell\prod\limits_{\substack{i_j\geq i_{j'}\\m < j < \ell}} a'_{i_j}}{a'_\ell\prod\limits_{\substack{i_j\geq i_{j'}+1\\m < j < \ell}} a'_{i_j}}=a'_{i_{j'}},
	\end{equation*}
	i.e. $|\alpha(b)|\leq \varepsilon$ for all $\alpha\in\Delta \backslash \Theta$.
	Thus, $b\in T_{\Theta}^{-}\left(\varepsilon\right)$.
	
	We take $c=b^{-1} a$, so $c_\ell=1$ and for all $1\leq i \leq \ell-1$,
	\begin{equation*}
		c_i=\frac{\prod\limits_{i\leq j \leq \ell}a'_j}{a'_\ell\prod\limits_{\substack{i_j\geq i\\m < j < \ell}} a'_{i_j}}=\prod_{\substack{i\leq i_j< \ell \\1 \leq j \leq m}} a'_{i_j}.
	\end{equation*} 
		
	By \Cref{roots_valuation_gln_var_change}, for all $1\leq j\leq m$ we have $\varepsilon<|a'_{i_j}|\leq 1$. Thus,  $c_i$ is bounded for all $1\leq i \leq \ell-1$ as a product of bounded elements.
	For all $1\leq i< \ell$, we write $c_i=u_i\pomega^{r_i}$ where $u_i\in \ocal^\times$, and $r_i$ is an integer in a bounded set. Next, we split $c=c_\pomega c_u$, such that $c_u$ is identified with $(u_1,\ldots,u_{\ell-1},1)$ and $c_\pomega$ is identified with $\left(\pomega^{r_1},\ldots,\pomega^{r_{\ell-1}},1\right)$.
	
	The stabilizer of $v$ is a congruence subgroup of $K=\mathrm{GL}_n(\ocal)$, the maximal compact subgroup of $G$. Hence, $K_v\leq K\cap T_1\cong \left(\ocal^\times\right)^n$, and as an open subgroup, it has a finite index in this compact group. i.e., there exists $d$ and $\{x_i\}_{i=1}^{d}$ such that
	\begin{equation*}
		K\cap T_1=\bigcup_{i=1}^{d} x_i K_v.
	\end{equation*}
	Now $c_u \in K\cap T_1$. Therefore, 
	\begin{equation*}
	c=c_\pomega c_u \in \bigcup_{i=1}^d c_\pomega x_i K_v \subseteq \bigcup_{\gamma\in \Gamma_\Theta^\varepsilon} \gamma K_v,
	\end{equation*}
	where $\Gamma_\Theta^\varepsilon$ is a finite set consisting of all the (finitely many) possibilities for $c_\pomega$ multiplied by all the finitely many representatives $x_i$.
	Hence,
	\begin{equation*}
	a=bc\in  T_{\Theta}^{-}\left(\varepsilon\right)\bigcup_{\gamma\in \Gamma_\Theta^\varepsilon} \gamma K_v.
	\end{equation*}	
\end{proof}

From this proof we conclude that for a given $a\in  {_{\Theta}T_{\phi}^{-}}\left(\varepsilon\right)$ identified with $(a_1,\ldots,a_\ell)$, $\Theta=\{\alpha_{i_1},\ldots,\alpha_{i_m}\}$, and $\Delta\backslash \Theta= \{\alpha_{i_{m+1}},\ldots\alpha_{i_{\ell-1}}\}$, the corresponding $b\in  T_{\Theta}^{-}\left(\varepsilon\right)$ from \Cref{torus_decomp_lem} is a block diagonal matrix with scalars blocks where the different scalars are
\begin{equation*}
	\left\{b_{i_{j'}}\right\}_{m<{j'}<\ell}\cup\{a'_\ell\}=\left\{a'_\ell\prod\limits_{\substack{i_j\geq i_{j'}\\m < j < \ell}} a'_{i_j}\right\}_{m<{j'}<\ell}\bigcup\{a'_\ell\},
\end{equation*}
where for all $1\leq i \leq \ell$, $a_i=\prod_{j=i}^{\ell}a'_j$, or equivalently $a'_\ell=a_\ell$ and for all $1\leq i \leq \ell-1$, $a'_i=a_{i+1}^{-1}a_i$.
Therefore, the different scalars in $b$ are
\begin{equation*}
\left\{a_\ell\prod\limits_{\substack{i_j\geq i_{j'}\\m < j < \ell}} a_{i_j+1}^{-1}a_{i_j}\right\}_{m<{j'}<\ell}\bigcup\{a_\ell\}.
\end{equation*}

By the multiplicity of characters and absolute values, and by the additivity of logarithm, we can write \eqref{asym_exp_p_adic} explicitly.
\begin{cor}%{asym_exp_for_all_prop}
	Let $G=\mathrm{GL}_n$. In the setting of \Cref{asym_exp_for_all_prop}, and by writing $\Delta\backslash\Theta=\{\alpha_{i_{r+1}},\ldots,\alpha_{i_{n-1}}\}$, formula \eqref{asym_exp_p_adic} takes the following form.
	\begin{equation}
		\left\langle \pi(a)v,\tilde{v}\right\rangle = \sum_{\underline{p'},\underline{p},\underline{\chi}}\alpha_{\underline{p'},\underline{p},\underline{\chi}}\chi_{n-r}(a_n)\left|a_n\right|^{p'_{n-r}}\log_q^{p_{n-r}}\left|a_n\right|\prod_{j=r+1}^{n-1}\chi_{j-r}(a_{i_j})\left|a_{i_j}\right|^{p'_{j-r}}\log_q^{p_{j-r}}\left|a_{i_j}\right|.
	\end{equation}
\end{cor}

\section*{Acknowledgment}
I would like to express my appreciation to David Soudry and Elad Zelingher for useful discussions and their valuable comments on earlier versions of this manuscript.
\bibliographystyle{alpha}
\bibliography{references}

\Addresses
\end{document}